\renewcommand\MR[1]{\relax} 
\newtheorem{thm}{Theorem}[section] 
\numberwithin{equation}{section}
\newtheorem{cor}[thm]{Corollary}
\newtheorem{lemma}[thm]{Lemma}
\theoremstyle{definition}
\theoremstyle{remark}
\newtheorem{remark}[thm]{Remark}
\def\mathcs{C^{*}}
\newcommand{\cs}{\ensuremath{\mathcs}}
\def\calg{$\cs$-algebra}
\DeclareMathSymbol{\rtimes}{\mathbin}{AMSb}{"6F}
\newcommand{\ib}{im\-prim\-i\-tiv\-ity bi\-mod\-u\-le}
\newcommand{\sme}{\,\mathord{\mathop{\text{--}}\nolimits_{\relax}}\,}
\newcommand\set[1]{\{\,#1\,\}}
\def\restr#1{|_{{#1}}}
\def\labelenumi{\textnormal{(\@alph\c@enumi)}}
\def\theenumi{\@alph \c@enumi}
\def\labelenumii{\textnormal{(\@roman\c@enumii)}}
\def\theenumii{\@roman \c@enumii}
\def\alphapart#1{\charno=96
\advance\charno by#1\char\charno}
\def\<{\langle}
\def\>{\rangle}
\let\ipscriptstyle=\scriptscriptstyle
\def\lipsqueeze{{\mskip -3.0mu}}
\def\ripsqueeze{{\mskip -3.0mu}}
\def\ipcomma{\nobreak\mathrel{,}\nobreak}
\newbox\ipstrutbox
\def\ipstrut{\copy\ipstrutbox}
\def\lip#1<#2,#3>{\mathopen{\relax_{\ipstrut\ipscriptstyle{
#1}}\lipsqueeze
\langle} #2\ipcomma #3 \rangle}
\def\blip#1<#2,#3>{\mathopen{\relax_{\ipstrut
\ipscriptstyle{ #1}}\lipsqueeze\bigl\langle} #2\ipcomma #3 \bigr\rangle}
\def\rip#1<#2,#3>{\langle #2\ipcomma #3
\rangle_{\ripsqueeze\ipstrut\ipscriptstyle{#1}}}
\def\brip#1<#2,#3>{\bigl\langle #2\ipcomma #3
\bigr\rangle_{\ripsqueeze\ipstrut\ipscriptstyle{#1}}}
\def\angsqueeze{\mskip -6mu}
\def\smangsqueeze{\mskip -3.7mu}
\def\trip#1<#2,#3>{\langle\smangsqueeze\langle #2\ipcomma #3
\rangle\smangsqueeze\rangle_{\ripsqueeze\ipstrut\ipscriptstyle{#1}}}
\def\btrip#1<#2,#3>{\bigl\langle\angsqueeze\bigl\langle #2\ipcomma
#3
\bigr\rangle
\angsqueeze\bigr\rangle_{\ripsqueeze\ipstrut\ipscriptstyle{#1}}}
\def\tlip#1<#2,#3>{\mathopen{\relax_{\ipstrut\ipscriptstyle{
#1}}\lipsqueeze \langle\smangsqueeze\langle} #2\ipcomma #3
\rangle\smangsqueeze\rangle}
\def\btlip#1<#2,#3>{\mathopen{\relax_{\ipstrut\ipscriptstyle{
#1}}\lipsqueeze
\bigl\langle\angsqueeze\bigl\langle} #2\ipcomma #3
\bigr\rangle\angsqueeze\bigr\rangle}
\def\ip(#1|#2){(#1\mid #2)}
\def\bip(#1|#2){\bigl(#1 \mid #2\bigr)}
\def\Bip(#1|#2){\Bigl( #1 \bigm| #2 \Bigr)}
\newcommand\B{\mathscr{B}}
\newcommand\E{\mathscr{E}}
\newcommand\CC{\mathscr{C}}
\newcommand\JJ{\mathscr{L}}
\newcommand\KK{\mathscr{K}}
\newcommand\A{\mathscr{A}}
\newcommand\M{\mathscr{M}}
\newcommand\go{G^{(0)}}
\newcommand\ho{H^{(0)}}
\newcommand\ko{K^{(0)}}
\newcommand{\thmref}[1]{Theorem~\textup{\ref{#1}}}
\newcommand{\corref}[1]{Corollary~\textup{\ref{#1}}}
\newcommand{\secref}[1]{Section~\textup{\ref{#1}}}
\newcommand\II{\mathscr{I}}
\renewcommand{\:}{\colon}
\newcommand{\fb}{Fell bundle}
\newcommand{\ibm}{imprimitivity bimodule}
\newcommand{\units}{^{(0)}}
\newcommand{\pre}[1]{{}_{#1}}
\newcommand{\inv}{^{-1}}
\newcommand{\under}{\backslash}
\newcommand\Id{\operatorname{Id}}
\begin{document}
\begin{abstract}
  We use the
Ladder Technique to establish bijections between the ideals of related
Fell bundles.
\end{abstract}

\title{Fell bundle ladder}

\author[Kaliszewski]{S. Kaliszewski}
\address{School of Mathematical and Statistical Sciences
\\Arizona State University
\\Tempe, Arizona 85287}
\email{kaliszewski@asu.edu}

\author[Quigg]{John Quigg}
\address{School of Mathematical and Statistical Sciences
\\Arizona State University
\\Tempe, Arizona 85287}
\email{quigg@asu.edu}

\author[Williams]{Dana P. Williams}
\address{Department of Mathematics\\ Dartmouth College \\ Hanover, NH
  03755-3551 USA}
\email{dana.williams@Dartmouth.edu}

 \date{July 21, 2025}

\subjclass[2000]{Primary  46L55}
\keywords{groupoid, Fell bundle, ideal, Rieffel correspondence, Morita
  equivalence, Ladder technique} 

\maketitle

\section{Introduction}
\label{sec:introduction}

In studying the structure of \cs-crossed products associated to
  groups or groupoids acting on a \cs-algebra, invariant ideals of the
  algebra play a key role.  Such ideals correspond naturally to ideals
  in the crossed product in such a way that the quotient of the
  crossed product by the ideal corresponds to a dynamical system for
  an action on the quotient of the \cs-algebra by the invariant ideal.
  Since it is profitable to think of the \cs-algebra of a Fell bundle
  over a groupoid as a powerful generalization of \cs-crossed
  products, it is natural to look for an analogous result for Fell
  bundle \cs-algebras.  In \cite{ion}, Marius Ionescu and the third
  author showed that one could identify suitably invariant ideals of
  the \cs-algebra obtained by restricting the Fell bundle to the unit
  space of the groupoid.  Then it was possible to obtain short exact
  sequences of \cs-algebras generalizing the cases where the there is
  a concrete action.  In \cite{kqwrieffel}, we showed that these
  generalized invariant ideals corresponded naturally to a special
  type of Fell subbundles that we called \emph{ideals} of the Fell
  bundle itself.  This made the generalization coming from \cite{ion}
  much easier to work with since the notion of an ideal in a Fell
  bundle is a purely bundle-theoretic notion which does not involve
  \cs-algebras or completions.  The main result in \cite{kqwrieffel}
  allowed us to establish an appropriate version of the Rieffel
  correspondence between ideals of equivalent \fb s.  At about the
  same time, in \cite{gkqwladder}, we, together with Matthew
  Gillespie, introduced what we call the ``Ladder Technique'', and
  used it to establish a lattice isomorphism between the sets of
  invariant ideals of a \calg\ and the crossed product by an action or
  coaction of a group.

In this paper we again employ the ladder technique, this time
to prove analogues for \fb s of our lattice isomorphism from
\cite{gkqwladder}.  More precisely, we have two related theorems: in
one, given an action of a groupoid $G$ on a \fb\ $\A$, we provide a
lattice isomorphism between the $G$-invariant ideals of $\A$ and the
ideals of the semidirect product $\A\propto G$,\footnote{In \cite{hkqw2} we introduced these semidirect-product Fell bundles using a different notation: $S(\A,G)$; we always had in mind that this notation was temporary until we decided upon a more appropriate one, which we are introducing here.} and in the other, we
give a lattice isomorphism between the ideals of any \fb\ $\A$ and the
$G$-invariant ideals of the action product $\A\rtimes G$.


In \secref{sec:prelim} we record our notation and review relevant
definitions and results concerning groupoid equivalences, \fb\
equivalences, ideals of \fb s, and the ladder technique.

\secref{sec:ladder} contains our main results, which use the
\emph{ladder technique} from \cite{gkqwladder}, and in fact
Theorems~\ref{fb ladder} and \ref{bonus ladder} have roughly the
same goal: bijections between sets of  ideals,
this time for \fb s.  In \secref{sec:struts} we build the vertical
struts for the ladders.

\section{Preliminaries}\label{sec:prelim}

Our groupoids will all be locally compact Hausdorff with open range
and source maps.  We refer to \cite[Section~2.3]{tool} for groupoid
equivalences.  Recall here that if $G$ and $H$ are groupoids, then a
space $Z$ is a $(G,H)$-equivalence if there are free and proper
commuting actions of $G$ and $H$ on $Z$ ($G$ on the left and $H$ on
the right) such that the moment map $\rho$ for the $G$-action factors
through a homeomorphism of $Z/H$ with $G\units$, and the moment map
$\sigma$ for the $H$-action factors through a homeomorphism of
$G\under Z$ with $H\units$.  We also require the moment maps to be
open, which is actually automatic as we require the range and source
maps on groupoids to be open.  If $Z$ is a $(G,H)$-equivalence then
there are continuous and open pairings
$\pre G[\cdot,\cdot]:Z*_\sigma Z\to G$ and
$[\cdot,\cdot]_H:Z*_\rho Z\to H$ such that $x=\pre G[x,y]\cdot y$ for
all $(x,y)\in Z*_\sigma Z$, and $x\cdot [x,y]_H=y$ for all
$(x,y)\in Z*_\rho Z$.

We refer to \cite[\S2]{kqwrieffel} for the required background on
Banach bundles, \fb s, and \fb\ equivalence.  In particular, our \fb s
will all be saturated (so that each fibre is an \ibm).  If $p:\B\to G$
is a \fb, $q:\E\to T$ is a Banach bundle, and $T$ is a left $G$-space,
then $\B$ acts on the left of $\E$ if there is a continuous pairing
$(b,e)\mapsto b\cdot e$ from
$\B*\E=\set{(b,e)\in \B\times \E:s(p(b))=\rho(q(e))}$ to $\E$ that covers
the pairing $(\gamma,x)\mapsto \gamma\cdot x$ from $G*T$ to $T$ in the
sense that $q(b\cdot e)=p(b)\cdot q(e)$, is associative in the sense
that $b\cdot (c\cdot e)=(bc)\cdot e$, and is submultiplicative for the
norms in the sense that $\|b\cdot e\|\le \|b\|\|e\|$.  Right actions are
defined similarly.  If $\B\to G$ and $\CC\to H$ are \fb s, then a
Banach bundle $\E\to T$ over a $(G,H)$-equivalence $T$ is a
$\B\sme \CC$ equivalence if there are continuous commuting actions of
$\B$ and $\CC$ on $\E$ ($\B$ on the left and $\CC$ on the right) such
that the bimodule actions are associative in the sense that
$(b\cdot e)\cdot c=b\cdot (e\cdot c)$, there is a continuous
fibre-wise sesquilinear pairing $\lip L<\cdot,\cdot>:\E*_\sigma\E\to\B$
that covers the pairing $\pre G[\cdot,\cdot]$ in the sense that
$p_\B(\lip L<e,f>)=\pre G[q(e),q(f)]$, commutes with the pairing
$\B*\E\to\B$ in the sense that $\lip L<b\cdot e,f>=b\lip L<e,f>$, and
similarly for the right action, and each $E_x$ is a
$B_{\rho(x)}\sme C_{\sigma(x)}$ \ibm.

The crux of the \emph{Ladder Technique} mentioned in the introduction
is a straightforward observation about maps between sets: if
\[
  \begin{tikzcd}[column sep= 3cm]
    &W
    \\
    Z\arrow[ur,"h"]
    \\
    &Y \arrow[ul,"g"'] \arrow[uu,"v"']
    \\
    X \arrow[ur,"f"] \arrow[uu,"u"]
  \end{tikzcd}
\]
is a commutative diagram of sets and maps such that $u$ and $v$ are
bijections, then $f$, $g$, and $h$ are also bijections.  We refer to
such a diagram as a \emph{ladder}, with \emph{vertical struts} $u$ and
$v$ and \emph{rungs} $f$, $g$, and $h$. We refer to the preceding
observation as the \emph{Ladder Lemma}.

\section{Fell Bundle Ideals}
\label{sec:fell-bundle-ideals}

The notion of a Fell bundle ideal and the Rieffel correspondence for
equivalent Fell bundles is central here.  For details we refer to
\cite[Subsection~2.7 and Section~3]{kqwrieffel}.  We recall some of
the basics for convenience.  A Fell subbundle $\II$ of a \fb\
$\B\to G$ is an \emph{ideal} of $\B$ if $bc\in \II$ whenever
$(b,c)\in\B^{(2)}$ and either $b\in \II$ or $c\in \II$.  In
particular, an ideal of a Fell bundle is itself a Fell bundle and is
therefore closed under the adjoint operation.

Suppose that $q:\E\to T$ is a $\B\sme\CC$ Fell bundle equivalence.
The \emph{Rieffel correspondence} for \fb\ equivalences
\cite[Theorem~3.10]{kqwrieffel} says that there are lattice
isomorphisms between the ideals of $\B$, the full Banach $\B\sme\CC$
submodules of $\E$, and the ideals of $\CC$.  Specifically, if $\II$
is an ideal in $\B$, then the corresponding full Banach $\B\sme\CC$
submodule, $\M_{\JJ}$, is
\begin{equation}
  \label{eq:3}
  \II\cdot \E:= \bigcup_{s(h)=\rho(t)} J_{h}\cdot E_{t}.
\end{equation}
Describing the ideal $\II_{\M}$ in $\B$ corresponding to a full Banach
$\B\sme\CC$ submodule, $\M$, takes a bit more fussing.  Extending the
usual conventions for \ib s, we let $ \lip L<\M,\E>$ denote the span
of all possible left-inner products $\lip L<m,e>$ with $m\in \M$ and
$e\in \E$ as in Eq (3.2) in \cite{kqwrieffel}.  Then
\begin{equation}
  \label{eq:5}
  \II_{\M}= \coprod_{h\in H} \overline{\lip L<\M,\E>\cap B_{h}}
\end{equation}
where the closure is taken in the norm topology on $B_{h}$.

We can describe \eqref{eq:5} a bit more elegantly using a simple
observation (which probably should have been noted in
\cite{kqwrieffel}).

\begin{lemma}
  \label{lem-smallest} Suppose that $p:\B\to G$ is a Fell bundle and
  that $\mathscr S$ is any subset of $\B$.  Then there is a smallest
  ideal, $\Id_{\B}(\mathscr S)$, of $\B$ such that
  $\mathscr S\subset \Id_{\B}(\mathscr S)$.  We call
  $\Id_{\B}(\mathscr S)$ the ideal of $\B$ generated by $\mathscr S$.
\end{lemma}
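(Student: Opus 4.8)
The natural approach is to define $\Id_{\B}(\mathscr S)$ as the intersection of \emph{all} ideals of $\B$ that contain $\mathscr S$, and then to check that this intersection is itself an ideal. First I would observe that the relevant family is nonempty: $\B$ is trivially a Fell subbundle of itself, and it satisfies the absorption condition vacuously, so $\B$ is an ideal of $\B$ with $\mathscr S\subset\B$. Hence
\[
  \mathcal F=\set{\II\mid \II\text{ is an ideal of }\B\text{ with }\mathscr S\subset\II}
\]
is a nonempty collection of subsets of $\B$, and $\Id_{\B}(\mathscr S):=\bigcap_{\II\in\mathcal F}\II$ is a well-defined subset of $\B$.

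The heart of the argument is showing that an arbitrary intersection of ideals is again an ideal. Let $\set{\II_\lambda}_{\lambda\in\Lambda}$ be any family of ideals of $\B$ and set $\II=\bigcap_\lambda\II_\lambda$. For each $\gamma\in G$ the fibre $\II\cap B_\gamma=\bigcap_\lambda(\II_\lambda\cap B_\gamma)$ is an intersection of closed linear subspaces of the Banach space $B_\gamma$, hence closed; and $\II$ is a closed subset of $\B$ as an intersection of closed subsets. Appealing to the standard fact (from the background in \cite[\S2]{kqwrieffel}) that a closed subset of a Fell bundle whose intersection with each fibre is a closed subspace, and which is closed under multiplication and involution, is itself a Fell subbundle in the relative topology, I would conclude that $\II$ is a Fell subbundle: it is closed under multiplication and involution because each $\II_\lambda$ is. Finally the absorption property is formal: if $(b,c)\in\B^{(2)}$ and, say, $b\in\II$, then $b\in\II_\lambda$ for every $\lambda$, so $bc\in\II_\lambda$ for every $\lambda$ since each $\II_\lambda$ is an ideal, whence $bc\in\II$; the case $c\in\II$ is symmetric. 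Thus $\II$ is an ideal of $\B$.

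Applying this to the family $\mathcal F$, the set $\Id_{\B}(\mathscr S)$ is an ideal of $\B$ containing $\mathscr S$, and minimality is then immediate: if $\mathcal J$ is any ideal of $\B$ with $\mathscr S\subset\mathcal J$, then $\mathcal J\in\mathcal F$, so $\Id_{\B}(\mathscr S)\subset\mathcal J$. I expect the only step requiring genuine care to be the bundle-theoretic claim that the intersection $\II$ carries the structure of a Fell subbundle in the subspace topology, rather than merely being a fibrewise-closed subset; the absorption property and the minimality are purely formal once that is in hand.
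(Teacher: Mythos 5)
Your minimality and nonemptiness steps are fine, but the step you yourself flagged as the one ``requiring genuine care'' is a genuine gap: the ``standard fact'' you appeal to is not in the background of \cite{kqwrieffel}, and it is false. A subset of a Fell bundle that is norm-closed, meets each fibre in a closed subspace, and is closed under multiplication and involution need \emph{not} be a Fell subbundle in the relative topology, because the restricted projection need not be open and there need not be continuous sections through its points --- and the paper's setup genuinely requires an ideal to be a Fell bundle in its own right. Concretely, regard $X=(-1,1)$ as a groupoid consisting entirely of units and let $\B=X\times\C$ be the trivial line bundle, a saturated Fell bundle whose ideals are the subbundles $\II_U=(U\times\sset{0}^{c}\text{-trivial part})$, i.e.\ $\II_U=(U\times\C)\cup(X\times\sset{0})$ for $U\subset X$ open. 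With $U_n=(-1/n,1/n)$, the set-theoretic intersection $\bigcap_n\II_{U_n}=(X\times\sset{0})\cup(\sset{0}\times\C)$ satisfies all of your hypotheses (fibrewise closed subspaces, closed in $\B$, closed under the operations, and even absorbing), yet it is not a Banach subbundle: every relative neighborhood of the point $(0,1)$ projects into $\sset{0}$, so the restricted projection is not open, and no continuous local section passes through $(0,1)$. Thus an intersection of honest ideals can fail to be an ideal, and your definition of $\Id_{\B}(\mathscr S)$ as the raw intersection of all ideals containing $\mathscr S$ does not obviously produce one; the absorption and minimality arguments, which are indeed formal, cannot repair this.

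The paper's proof deliberately avoids the bundle topology at exactly this point. By \cite[Proposition~2.30]{kqwrieffel}, each ideal $\II^{a}$ of $\B$ corresponds to a $G$-invariant ideal $I^{a}=\Gamma_0(\go;\II^{a})$ of the \calg\ $\Gamma_0(\go;\B)$; there the intersection $I=\bigcap_a I^{a}$ is unproblematically a closed $G$-invariant ideal, and the same proposition guarantees that the pull-back $\II=\set{b\in\B:b^*b\in I_{s(b)}}$ is an ideal of $\B$. So where you try to verify the subbundle axioms for the intersection directly, the paper manufactures an ideal on the \cs-side and transports it back, never needing your false lemma. Be aware that the fibrewise behavior of infinite intersections is precisely where the subtlety lives: the pulled-back $\II$ is an ideal contained in the set-theoretic intersection, and the identification of the two (the paper's closing ``it is clear that $\II=\bigcap\II^{a}$'') turns on whether the fibre map $I\mapsto I_u$ commutes with the intersection --- the example above shows this demands care for infinite families. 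Any correct writeup along your lines should therefore route through the section algebra as the paper does, rather than intersecting the bundles directly.
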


\begin{proof}
  It will suffice to see that the arbitrary intersections of ideals is
  an ideal.  To this end, let $\II^{a}$ be an ideal in $\B$ for each
  $a\in A$.  By \cite[Proposition~2.30]{kqwrieffel},
  $I^{a}=\Gamma_{0}(\go;\II_{a})$ is a $G$-invariant ideal in
  $\Gamma_{0}(\go;\B)$, and
  $\II^{a}=\set{b\in\B:b^{*}b\in I^{a}_{s(b)}}$.  Then
  $I=\bigcap I^{a}$ is a $G$-invariant ideal in $\Gamma_{0}(\go;\B)$,
  and $\II=\set{b\in \B:b^{*}b\in I_{s(b)}}$ is an ideal in $\B$ by
  \cite[Proposition~2.30]{kqwrieffel}.  Now it is clear that
  $\II=\bigcap \II^{a}$.
\end{proof}

Now it is not hard to see that
\begin{equation}
  \label{eq:6}
  \II_{\M}=\Id_{\B}(\lip L<\B,\M>).
\end{equation}

\begin{remark}
  Of course, it is possible in the proof of Lemma~\ref{lem-smallest}
  that $I=\bigcap I^{a}$ is the zero ideal.  But then
  $\JJ=\set{0_{x}:x\in G}$.
\end{remark}

\section{The vertical struts}\label{sec:struts}
Let $H$ be a wide closed subgroupoid of $M$.\footnote{Here ``wide''
  simply means that $\ho=M\units$.}  Since our standing assumptions
are that both $H$ and $M$ have open range (and source) maps, it
follows from \cite[Corollary~2.50]{tool} that $M$ is a
$(M\rtimes M/H, H)$ equivalence.  Specifically, the moment map for the
left-action is given by $\rho(m)=m H$ where we have identified the
unit space of $M\rtimes M/H$ with the orbit space $M/H$. Then
$(m,nH)\cdot n=mn$.  The moment map for the right action is
$\sigma(m)=s(m)$ and then $m\cdot h=mh$.  Note that if
$\rho(m)=\rho(n)$, then $nH=mH$ and there is a unique $h\in H$ such
that $mh=n$ and $[m,n]_{H}=h $.  On the other hand, if
$\sigma(m)=\sigma(n)$ then $(mn^{-1},nH)\cdot n=m$ and
$\pre {M\rtimes M/H}[m,n] = (mn^{-1},nH)$.

\begin{thm}\label{abstract left}
  Let $p:\B\to M$ be a \fb, and let $H$ be a closed subgroupoid of $M$
  that is wide in the sense that $H\units=M\units$.  
  Since $M$ acts on
  $M/H$ we can form the action Fell bundle $\B\rtimes M/H$ as defined
  following \cite[Remark~4.12]{hkqw2}.
  Then $\B$ is a
  $\B\rtimes M/H\sme \B|_H$ equivalence, with operations
  \begin{enumerate}
  \item left action: $(b,mH)\cdot c=bc$;

  \item right action: $b\cdot c=bc$;

  \item left inner product: $\lip L<b,c>=(bc^*,p(c)H)$;

  \item right inner product: $\rip R<b,c>=b^*c$.
  \end{enumerate}
\end{thm}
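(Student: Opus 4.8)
The plan is to verify the equivalence axioms directly, using the groupoid equivalence structure on $M$ as a $(M\rtimes M/H, H)$-equivalence that was established just before the theorem statement. The strategy is to check each of the defining conditions for a $\B\rtimes M/H\sme\B|_H$ Fell bundle equivalence from the preliminaries: that the two actions are continuous, commuting, and associative; that the inner products are sesquilinear, cover the appropriate groupoid pairings, are compatible with the module actions; and that each fibre $E_x = B_x$ is a genuine imprimitivity bimodule between the relevant fibres of $\B\rtimes M/H$ and $\B|_H$.

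I would organize the verification around the groupoid picture already in hand. The left action is defined on $(\B\rtimes M/H)*\B$, the right on $\B*\B|_H$; I would first confirm these pairings land in the correct fibres, i.e.\ that the module structure covers the groupoid actions. For the left action, $(b,mH)\cdot c = bc$ requires $s(p(b)) = \rho(p(c))$ interpreted via the identification $\rho(m) = mH$; I would unwind this to the condition $s(p(b)) = p(c)H$, which is exactly when the product $bc$ is defined in $\B$, and check $p(bc) = (p(b),\,mH)\cdot p(c) = p(b)p(c)$ matches the left $M\rtimes M/H$-action. The right action $b\cdot c = bc$ covers $m\cdot h = mh$ and is immediate. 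Associativity of each action and the bimodule associativity $(b\cdot e)\cdot c = b\cdot(e\cdot c)$ reduce to associativity of multiplication in $\B$, and submultiplicativity of norms follows from the Fell bundle norm inequalities.

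Next I would handle the inner products. The key computations are that $\lip L<b,c> = (bc^*, p(c)H)$ covers $\pre{M\rtimely M/H}[\cdot,\cdot]$ and $\rip R<b,c> = b^*c$ covers $[\cdot,\cdot]_H$. Using the explicit pairings recorded before the theorem, if $\sigma(p(b)) = \sigma(p(c))$, i.e.\ $s(p(b)) = s(p(c))$, then $\pre{M\rtimes M/H}[p(b),p(c)] = (p(b)p(c)^{-1}, p(c)H)$, and I would match this against $p_{\B\rtimes M/H}(bc^*) = (p(bc^*), p(c)H) = (p(b)p(c)^{-1}, p(c)H)$. Similarly, for the right inner product, when $\rho(p(b)) = \rho(p(c))$ the pairing $[p(b),p(c)]_H$ is the unique $h\in H$ with $p(b)h = p(c)$, and $p(b^*c) = p(b)^{-1}p(c) = h$ lands in $H$ as required. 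The sesquilinearity and the compatibility conditions such as $\lip L<(b',m'H)\cdot b,c> = (b',m'H)\lip L<b,c>$ then reduce to the involutive and multiplicative identities $ (b'b)c^* = b'(bc^*)$ in $\B$.

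\textbf{The main obstacle} I expect is the final fibrewise imprimitivity bimodule condition: showing that each $B_x$, with these operations, is honestly a $(B\rtimes M/H)_{(p(x),\,p(x)H)}\sme (B|_H)_{s(p(x))}$ imprimitivity bimodule, i.e.\ that the two inner products are full, positive, and satisfy the associativity relation $\lip L<b,c>\cdot d = b\cdot\rip R<c,d>$. The associativity relation unwinds to $(bc^*)d = b(c^*d)$, which is again just associativity in $\B$, but \emph{fullness} and \emph{positivity} require genuine input: I would invoke the saturation hypothesis on $\B$ (each fibre is an imprimitivity bimodule and $\B$ is saturated) together with the identification of the relevant fibres. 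The fibre of $\B\rtimes M/H$ over $(p(x),p(x)H)$ is built from $B_{p(x)}$, and the claim that $B_x = B_{p(x)}$ is already a $B_{r(p(x))}\sme B_{s(p(x))}$ imprimitivity bimodule is exactly the statement that $\B$ is a saturated Fell bundle; the passage to the action bundle is what requires care, and matching the inner-product normalizations there is the one spot where I would slow down and argue carefully rather than appeal to associativity in $\B$.
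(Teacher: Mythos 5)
Your proposal takes essentially the same route as the paper's proof: a direct verification of the equivalence axioms in which the covering computations for $\lip L<\cdot,\cdot>$ and $\rip R<\cdot,\cdot>$ are matched against the explicit pairings $\pre{M\rtimes M/H}[m,n]=(mn^{-1},nH)$ and $[m,n]_{H}$ recorded before the theorem, the associativity and compatibility conditions are reduced to multiplication and the involution in $\B$, and the final fibrewise condition is settled exactly as in the paper, by noting that $(\B\rtimes M/H)_{\rho(m)}=B_{r(m)}\times\{mH\}$ acts through $B_{r(m)}$, so that saturation of $\B$ (each $B_m$ being a $B_{r(m)}\sme B_{s(m)}$ \ibm) supplies fullness and positivity. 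The only difference is one of emphasis---you flag the last step as the main obstacle where the paper treats it as immediate---but your resolution of it coincides with the paper's.
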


\begin{proof}
  The left-action defined in (a) clearly makes sense on
  $(\B\rtimes M/H)*\B$ and covers the $M\rtimes M/H$ action on $M$.
  Also, on the one hand,
  \begin{equation}
    \label{eq:1}
    \bigl((b,mH)(c,nH)\bigr)\cdot d= (bc,nH)\cdot d= bcd.
  \end{equation}
  While on the other hand
  \begin{equation}
    \label{eq:2}
    (b,mH)\cdot \bigl( (c,nH)\cdot d\bigr) = (b,mH)\cdot cd =bcd.
  \end{equation}
  Therefore the pairing is associative.  Since
  \begin{equation}
    \label{eq:4a}
    \| (b,mH)\cdot c\|=\|bc\|\le \|b\|\|c\|=\|(b,mH)\|\|c\|,
  \end{equation}
  the pairing is submultiplicative.  Therefore we have an action of
  the semidirect product Fell bundle $\B\rtimes M/H$ on the Banach
  bundle $\B$ as required.

  For the associativity of the bimodule actions, we have
  \begin{align*}
    \bigl((b,mH)\cdot c\bigr)\cdot d
    &=(bc)\cdot d
      =bcd
      =(b,mH)\cdot (c\cdot d).
  \end{align*}

  We show that the left pairing $\B*_\sigma\B\to \B\rtimes M/H$ covers
  the pairing $M*_\sigma M\to M\rtimes M/H$:
  \begin{align*}
    p\left(\lip L<b,c>\right)
    &=p(bc^*,p(c)H)
    \\&=\bigl(p(b)p(c)\inv,p(c)H\bigr)
    \\&=\pre{M\rtimes M/H}[p(b),p(c)].
  \end{align*}

  For the adjoint property of the left pairing, we have
  \begin{align}
    \lip L<b,c>^*
    &=(bc^*,p(c)H)^*
    \\&=(cb^*,p(bc^*)p(c)H)
    \\&=(cb^*,p(b)p(c)\inv p(c)H)
    \\&=(cb^*,p(b)H)
    \\&=\lip L<c,b>.
  \end{align}

  Next,
  \begin{align*}
    \lip L<(b,mH)\cdot c,d>
    &=\lip L<bc,d>
    \\&=(bcd^*,p(d)H)
    \\&=(b,mH)\cdot (cd^*,p(d)H)
    \\&= (b,mH)\cdot \lip L<c,d>,
  \end{align*}
  where the operations are valid because
  \[
    mH=p(c)H=p(cd^*)p(d)H.
  \]

  The verifications of the remaining axioms, involving the right
  pairing, are similar and easier.

  Finally, for $m\in M$ we need to verify that $B_m$ is a
  $(\B\rtimes M/H)_{\rho(m)}\sme (\B|_H)_{\sigma(m)}$ \ibm.
  Unraveling the definitions, we must show that $B_m$ is a
  $(B_{r(m)}\times \{mH\})\sme B_{s(m)}$ \ibm.  Since the left-module
  action of $B_{r(m)}\times \{mH\}$ on $B_m$ is essentially just the
  left action of $B_{r(m)}$, the axiom follows immediately from the
  fact that $B_m$ is a $B_{r(m)}\sme B_{s(m)}$ \ibm.
\end{proof}

Recall that if $G$ acts on $H$ by isomorphisms, then we can form the
semidirect product $H\propto G$ as in \cite[Definition~5.1]{hkqw1}.
Then if $p:\A\to H$ is a Fell bundle and $G$ acts on $\A$ by
isomorphisms we can form the semidirect product Fell bundle $\A\propto
G$ as defined prior to \cite[Lemma~5.5]{hkqw1}.

\begin{cor}\label{left}
  Let $G$ act by isomorphisms on a \fb\ $p\:\A\to H$.  Then
  $\A\propto G$ is an $\A\propto G\rtimes G\sme \A$-equivalence, with
  operations
  \begin{enumerate}
  \item \label{La} left action:
    $(a,\gamma,\eta)\cdot (b,\eta)=\bigl(a(\gamma \cdot
    b),\gamma\eta\bigr)$;

  \item \label{Lb} right action:
    $(b,\eta)\cdot c=\bigl(b(\eta\cdot c),\eta\bigr)$;

  \item \label{Lc} left inner product:
    $\lip L<{(a,\gamma)},{(b,\eta)}> =\bigl(a(\gamma\eta\inv\cdot
    b^*),\gamma\eta\inv,\eta\bigr)$;

  \item \label{Ld} right inner product:
    $\rip R<{(a,\gamma)},(b,\gamma)> =\gamma\inv\cdot(a^*b)$.
  \end{enumerate}
\end{cor}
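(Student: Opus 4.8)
The plan is to deduce Corollary~\ref{left} as a special case of Theorem~\ref{abstract left}. I would invoke the theorem for the Fell bundle $\B:=\A\propto G$ over the groupoid $M:=H\propto G$, using for the wide closed subgroupoid the canonical copy of $H$ inside $H\propto G$, namely the elements whose $G$-component is a unit. The first job is to confirm that this copy of $H$ really is a closed subgroupoid that is wide---i.e.\ that its unit space exhausts $(H\propto G)\units$---and that $H\propto G$ inherits open range and source maps, so that the standing hypotheses needed to run Theorem~\ref{abstract left} are in force; both points come from the construction of the semidirect product in \cite{hkqw1} together with our standing assumptions on $H$ and $G$.

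With the theorem available, the substance of the proof is a short dictionary of identifications. First, $(\eta,\gamma)H\mapsto\gamma$ is a homeomorphism $M/H\variso G$: right multiplication by the copy of $H$ leaves the $G$-coordinate unchanged, and two elements sharing a $G$-coordinate differ on the right by an element of $H$. Second, $\B|_H=(\A\propto G)|_H\cong\A$ via $(c,u)\mapsto c$, since the fibre of $\A\propto G$ over a point $(\eta,u)$ of the copy of $H$ is precisely $A_\eta\times\set{u}$. Third, transporting through these two identifications carries the action Fell bundle $\B\rtimes M/H$ onto $\A\propto G\rtimes G$; under the resulting dictionary a typical element $(b,mH)$ of $\B\rtimes M/H$ with $b=(a,\gamma)$ and $mH\leftrightarrow\eta$ is recorded as the triple $(a,\gamma,\eta)$, while a bimodule element of $\B=\A\propto G$ is written $(b,\eta)$.

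Once the dictionary is fixed, each operation (a)--(d) is obtained by rewriting the matching operation of Theorem~\ref{abstract left} and simplifying with the semidirect-product multiplication $(a,\gamma)(b,\eta)=(a(\gamma\cdot b),\gamma\eta)$ and involution $(a,\gamma)^*=(\gamma\inv\cdot a^*,\gamma\inv)$. For example, the theorem's left action $(b,mH)\cdot c=bc$ becomes the product $(a,\gamma)(b,\eta)=(a(\gamma\cdot b),\gamma\eta)$, which is (a); and the left inner product $\lip L<b,c>=(bc^*,p(c)H)$ unwinds, using the involution formula together with $\gamma\cdot(\eta\inv\cdot b^*)=(\gamma\eta\inv)\cdot b^*$, to $(a(\gamma\eta\inv\cdot b^*),\gamma\eta\inv,\eta)$, which is (c). The right action $b\cdot c=bc$ and right inner product $\rip R<b,c>=b^*c$ reduce in the same mechanical way to (b) and (d), the latter collapsing to $\gamma\inv\cdot(a^*b)$ after applying the identification $\B|_H\cong\A$.

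I expect the only real obstacle to be the bookkeeping hidden in these identifications---in particular tracking the fibre, composability, and moment-map conditions carefully enough to see why certain coordinates must coincide. The repeated $\eta$ in (a) is forced by the compatibility condition for the left action, which requires the coset label $mH$ to equal $\rho$ of the bimodule element, i.e.\ its $G$-coordinate $\eta$; the repeated $\gamma$ in (d) is forced because the right inner product is defined only on $\rho$-matched pairs, and $\rho(m)=mH$ makes that the condition of lying in a common $H$-coset, hence of sharing a $G$-coordinate. Once these matchings are pinned down, every remaining verification is a direct substitution into the semidirect-product formulas, so no estimates or new structural arguments are needed beyond what Theorem~\ref{abstract left} already provides.
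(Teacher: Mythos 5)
Your proposal matches the paper's proof: the paper likewise deduces the corollary from Theorem~\ref{abstract left} with $M=H\propto G$ and $\B=\A\propto G$, taking the wide subgroupoid $H*\go$ (the elements with unit $G$-component), using the homeomorphism $[h,x]\mapsto x$ of $(H\propto G)/H$ onto $G$ to identify $\B\rtimes M/H$ with $(\A\propto G)\rtimes G$, identifying $(\A\propto G)|_H\cong\A$, and obtaining the formulas (a)--(d) by routine computation. Your write-up is in fact slightly more explicit than the paper's, carrying out the ``routine computations'' and correctly explaining why the repeated coordinates $\eta$ in (a) and $\gamma$ in (d) are forced by the composability and moment-map conditions.
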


\begin{proof}
  We appeal to \thmref{abstract left}, with $M=H\propto G$ and
  $\B=\A\propto G$.  Then we can identify $H$ with the wide
  subgroupoid $H*\go=\set{(h,u)\in H\times\go:\rho(h)=u}$ of the
  semidirect product $H\propto G$.  Then $H$ acts on the right of
  $M=H\propto G$ and $[h,x]\mapsto x$ is a homeomorphism of
  $(H\propto G)/H$ onto $G$ where as usual we have written $[h,x]$ for
  the $H$-orbit of $(h,x)\in H\propto G$.  Furthermore, this map
  intertwines the natural $H\propto G$-action on the quotient
  $(H\propto G)/H$ with the composition of the natural map on $H\propto G$
  onto $G$ with left translation.  Therefore the horizontal maps
  induced by $[h,x]\mapsto x$ induce a diagram
  \begin{equation}
    \label{eq:4}
    \begin{tikzcd}
      (\A\propto G)\rtimes (H\propto G)/H \arrow[d] \arrow[r]
      & (\A\propto G)\rtimes G \arrow[d] \\
      (H\propto G)\rtimes (H\propto G)/H \arrow[r] & (H\propto
      G)\rtimes G
    \end{tikzcd}
  \end{equation}
  which gives a Fell bundle isomorphism of
  $\B\rtimes M/H = (\A\propto G)\rtimes (H\propto G)/H$ with
  $(\A\propto G)\rtimes G$.

  Since $(\A\propto G)|_{H}$ is isomorphic (as a Fell bundle) to $\A$,
  we can realize $\A\propto G$ as a
  $(\A\propto G)\rtimes G\sme \A$-equivalence.  Then the formulas for
  the actions and inner products follow from routine computations.
\end{proof}

For convenience and easy reference, we record some results from
\cite{hkqw2}.  Recall from \cite[Proposition~6.3]{hkqw2} that if $G$
acts freely and properly on $H$ by isomorphism, then $H$ becomes a
$H\propto G\sme G\backslash H$ equivalence where the left-action of
$H\propto G$ on $H$ is given by $(h,x)\cdot k=h(x\cdot k)$ and the
right action of $G\backslash H$ on $H$ is given by
$h\cdot (G\cdot k)=hk$ provided $s(h)=r(k)$.  Then we have the
following.

\begin{thm} [{\cite[Theorem~7.2]{hkqw2}}]\label{abstract right} Let
  $\B\to H$ be a \fb, and let $G$ act on $\B$ principally by
  isomorphisms.  Then $\B$ is a $\B\propto G\sme G\under \B$
  equivalence, with operations
  \begin{enumerate}
  \item left action: $(b,\gamma)\cdot c=b(\gamma\cdot c)$;

  \item right action: $b\cdot (G\cdot c)=bc$;

  \item left inner product: $\lip L<b,c>=\bigl(b(x\cdot c^*),x)$ where
    $x=\pre G[s(b),s(c)]$;

  \item right inner product: $\rip R<b,c>=G\cdot b^*c$.
  \end{enumerate}
\end{thm}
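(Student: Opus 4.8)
The plan is to follow the template of the proof of \thmref{abstract left}. The underlying groupoid equivalence is already supplied by \cite[Proposition~6.3]{hkqw2}, recorded just above, which exhibits $H$ as an $H\propto G\sme G\under H$ equivalence; so the work is to promote this to the level of Fell bundles by checking each axiom in the definition of a $\B\propto G\sme G\under\B$ equivalence against the formulas in \textup{(a)--(d)}. I would first confirm that the two module pairings are well defined, continuous, and cover the groupoid actions of $H\propto G$ on the left and $G\under H$ on the right. For the left action $(b,\gamma)\cdot c=b(\gamma\cdot c)$ this is immediate: the product $b(\gamma\cdot c)$ is defined exactly when the composability condition matching $\rho$ to the $H\propto G$-source holds, and associativity together with the submultiplicative bound $\|(b,\gamma)\cdot c\|\le\|(b,\gamma)\|\,\|c\|$ then follow from the corresponding properties of the multiplication on $\B$ and the fact that the $G$-action is isometric.

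The one genuinely new point, and what I expect to be the main obstacle, is the right-hand side $G\under\B$. Unlike the left action, the right action $b\cdot(G\cdot c)=bc$ and the right inner product $\rip R<b,c>=G\cdot b^*c$ are defined on the orbit Fell bundle over $G\under H$, so I must check that the formulas are independent of the chosen representatives and remain continuous after passing to the quotient. Here the principality of the $G$-action is essential: given $b$ and an orbit $G\cdot c$, freeness supplies a \emph{unique} representative $c$ of that orbit with $r(c)=s(b)$, which is precisely the representative selected by the groupoid formula $h\cdot(G\cdot k)=hk$, and properness is what yields continuity on the quotient. A parallel argument shows the orbit $G\cdot b^*c$ is well defined independently of choices.

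With the two actions in hand, I would verify the inner-product axioms. The left inner product $\lip L<b,c>=(b(x\cdot c^*),x)$, with $x=\pre G[s(b),s(c)]$, should be shown to cover $\pre{H\propto G}[\cdot,\cdot]$, to satisfy $\lip L<b,c>^*=\lip L<c,b>$, and to be compatible with the left action, $\lip L<(b,\gamma)\cdot c,d>=(b,\gamma)\cdot\lip L<c,d>$; the right inner product $\rip R<b,c>=G\cdot b^*c$ is handled analogously and, as in \thmref{abstract left}, should be easier. The two pairings are then tied together by the imprimitivity relation $\lip L<b,c>\cdot d=b\cdot\rip R<c,d>$. Each of these reduces, after unwinding the semidirect-product and quotient coordinates, to identities already available for the multiplication and adjoint in $\B$ together with $G$-equivariance; the twisting element $x$ and its compatibility with the pairings on $H$ are exactly what make the group coordinates match.

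Finally, I would check the fibrewise condition: that each $B_h$ is a $(\B\propto G)_{\rho(h)}\sme(G\under\B)_{\sigma(h)}$ \ibm. Unravelling the definitions, $(\B\propto G)_{\rho(h)}$ is a copy of $B_{r(h)}$ sitting over the unit with trivial group coordinate, while $(G\under\B)_{\sigma(h)}$ is a copy of $B_{s(h)}$, so the claim collapses to the already-known statement that $B_h$ is a $B_{r(h)}\sme B_{s(h)}$ \ibm, which holds because $\B$ is saturated. Throughout, the substantive difficulty is the bookkeeping on the quotient side and the verification of well-definedness and continuity there, all of which rests on the principal hypothesis on the $G$-action.
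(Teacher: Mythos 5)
The paper offers no proof of this statement: it is recorded verbatim from \cite[Theorem~7.2]{hkqw2} ``for convenience and easy reference,'' so there is no in-paper argument to compare with line by line. Judged against the template the paper does supply---its proof of \thmref{abstract left}---your outline is correct and has the right shape: promoting the groupoid equivalence of \cite[Proposition~6.3]{hkqw2} to the bundle level; checking that the pairings cover the groupoid operations and are associative and submultiplicative; verifying the adjoint and compatibility identities and the imprimitivity relation $\lip L<b,c>\cdot d=b\cdot \rip R<c,d>$ (which indeed reduces to computing $b\,(x\cdot(c^{*}d))$ two ways); and finally collapsing the fibrewise \ibm\ condition to saturation of $\B$, exactly as in the last step of the proof of \thmref{abstract left}. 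Your localization of the genuine novelty in the quotient side $G\under\B$ is also accurate, with one small correction: the right inner product $\rip R<b,c>=G\cdot b^{*}c$ takes honest elements of $\B$, not orbits, as inputs, so there is no representative-independence to verify there; the only representative choice occurs in the right action $b\cdot(G\cdot c)=bc$, where freeness selects the unique member of the orbit whose range is $s(p(b))$. Properness, for its part, is not used pointwise in these verifications but is what makes $G\under\B$ a Banach bundle over $G\under H$ at all---a fact your argument quietly presupposes and which belongs to the earlier development in \cite{hkqw2} rather than to this verification.
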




Concretely, we can suppose that $K$ and $G$ act on (the left) of a
space $T$ such that the actions commute and the moment map
$\rho:T\to \ko$ is a free and proper $G$-bundle.  Then $G$ acts freely
and properly by isomorphisms on the action groupoid $K\rtimes T$ via
$x\cdot (k,t)=(k,x\cdot t)$.  Then we can identify
$G\backslash (H\rtimes T)$ with $K$ and $K\rtimes T$ is a
$(K\rtimes T)\propto G\sme K$ equivalence.  Then we have the
following.

\begin{cor}[{\cite[Theorem~7.1]{hkqw2}}] \label{right with T} Let $K$,
  $G$, and $T$ be as above.  Let $p:\A\to K$ be a \fb.  Then
  $\A\rtimes T$ is an $\A\rtimes T\propto G\sme \A$ equivalence, with
  operations
  \begin{enumerate}
  \item left action: $(a,t,x)\cdot (b,t')=(ab,x\cdot t')$;

  \item right action: $(a,t)\cdot b=(ab,p(b)\inv\cdot t)$;

  \item left inner product:
    $\lip L<{(a,t)},{(b,t')}> =(ab^*,p(b)\cdot t,x)$ where $x$ is the
    unique element of $G$ such that $t=x\cdot t'$;

  \item right inner product: $\rip R<{(a,t)},(b,t')> =a^*b$.
  \end{enumerate}
\end{cor}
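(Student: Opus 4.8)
The plan is to deduce Corollary~\ref{right with T} from \thmref{abstract right} by specializing the general principal action to the concrete action groupoid $K\rtimes T$, exactly as the setup preceding the statement prepares us to do. First I would take $H=K\rtimes T$ in \thmref{abstract right}, with $G$ acting by the isomorphisms $x\cdot(k,t)=(k,x\cdot t)$; the hypotheses of the abstract theorem hold because, by assumption, $\rho\:T\to\ko$ is a free and proper $G$-bundle, which is precisely what makes the $G$-action on $K\rtimes T$ principal. \thmref{abstract right} then immediately yields that $\B=\A\rtimes T$ (viewed as a \fb\ over $H=K\rtimes T$) is a $(\A\rtimes T)\propto G\sme G\under(\A\rtimes T)$ equivalence, with the four operations in the abstract form.

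The substance of the corollary is to rewrite each of these four abstract operations in the concrete coordinates coming from the identification $G\under(K\rtimes T)\cong K$, so I would next pin down this identification carefully. Under it, the $G$-orbit $G\cdot(k,t)$ of a pair corresponds to $k\in K$ (since $G$ moves only the $T$-coordinate and $\rho$ is a principal $G$-bundle), and the right action $b\cdot(G\cdot c)=bc$ of \thmref{abstract right}(b) becomes the formula $(a,t)\cdot b=(ab,p(b)\inv\cdot t)$ once one tracks how the source/range data in $K\rtimes T$ interact with the $G$-coordinate; the appearance of $p(b)\inv\cdot t$ records exactly the shift in the $T$-coordinate needed so that the product lands over the correct point of $K$. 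Similarly the left action $(b,\gamma)\cdot c=b(\gamma\cdot c)$ specializes to $(a,t,x)\cdot(b,t')=(ab,x\cdot t')$, since $\gamma\cdot(b,t')=(b,x\cdot t')$ for $\gamma$ the image of $x$.

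For the inner products I would translate \thmref{abstract right}(c) and (d) term by term. The right inner product $\rip R<b,c>=G\cdot b^*c$ collapses to $\rip R<{(a,t)},(b,t')>=a^*b$ after the identification $G\under(\A\rtimes T)\cong\A$, because passing to the $G$-orbit forgets the $T$-coordinate and leaves only the product in $\A$. The left inner product $\lip L<b,c>=\bigl(b(x\cdot c^*),x\bigr)$ with $x=\pre G[s(b),s(c)]$ requires computing the concrete pairing $\pre G[\cdot,\cdot]$ for the principal $G$-action on $K\rtimes T$: here $x$ is the unique element of $G$ with $t=x\cdot t'$ (matching the source data of the two elements), and substituting this into the abstract formula gives $\lip L<{(a,t)},{(b,t')}>=(ab^*,p(b)\cdot t,x)$, where the middle $T$-coordinate $p(b)\cdot t$ records the range of the product $ab^*$ in $T$.

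I expect the main obstacle to be bookkeeping rather than conceptual: keeping the various source and range maps of $K\rtimes T$ straight through the identifications $G\under(K\rtimes T)\cong K$ and $G\under(\A\rtimes T)\cong\A$, and in particular verifying that the element $x\in G$ appearing in the left inner product is well defined and agrees with the abstract pairing $\pre G[s(b),s(c)]$. Since the theorem statement already describes these as ``routine computations'' in the parallel \corref{left}, the honest proof is just to invoke \thmref{abstract right} with the stated $H$, $G$, and action, record the two identifications, and observe that each of (a)--(d) is the abstract operation read off in the concrete coordinates.
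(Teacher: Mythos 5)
Your proposal is correct and matches the paper's intended route exactly: the paper records this corollary as \cite[Theorem~7.1]{hkqw2} and, in the paragraph preceding it, sets it up precisely as the specialization of \thmref{abstract right} to $H=K\rtimes T$ with the $G$-action $x\cdot(k,t)=(k,x\cdot t)$ and the identifications $G\under(K\rtimes T)\cong K$ and $G\under(\A\rtimes T)\cong \A$. Your coordinate translations of the four operations (including the pairing $x=\pre G[s(b),s(c)]$ becoming the unique $x$ with $t=x\cdot t'$) are all accurate.
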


Note that we have commuting left actions of $G$ on itself---the first
just left translation and the second given by $x\cdot y=yx^{-1}$.
Furthermore, the moment map $r:G\to\go$ for left translation is a free
and proper $G$-bundle for the action by right translation.  Thus, if
$p:\A\to G$ is a Fell bundle, then the corresponding $G$-action on the
action bundle $\A\rtimes G$ is given by $x\cdot
(b,y)=(b,yx^{-1})$. Then we have the following result.

\begin{cor}[{\cite[Theorem~8.1]{hkqw2}}]\label{right}
  Let $p:\A\to G$ be a \fb.  Then $\A\rtimes G$ is an
  $\A\rtimes G\propto G\sme \A$-equivalence, with operations
  \begin{enumerate}
  \item left action: $(a,x,y)\cdot (b,z)=(ab,zy\inv)$;

  \item right action: $(a,z)\cdot b=(ab,p(b)^{-1}z )$;

  \item left inner product:
    $\lip L<{(a,z)},{(b,y)}>=\bigl(ab^*,p(b)z,z\inv y)$;

  \item right inner product: $\rip R<(a,\zeta),(b,\gamma)> =a^*b$.
  \end{enumerate}
\end{cor}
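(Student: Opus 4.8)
The plan is to obtain this as the special case of \corref{right with T} in which $K=G=T$: let $K=G$ act on $T=G$ by left translation, and let the second copy of $G$ act by $x\cdot y=yx\inv$, which is exactly the pair of commuting left actions recorded in the paragraph preceding the statement. As noted there, these two actions commute and the moment map $r\:G\to\go$ for left translation is a free and proper $G$-bundle for the right-translation action, so the hypotheses of \corref{right with T} are met (with $p\:\A\to K=\A\to G$). Under this identification the action bundle $\A\rtimes T$ is literally $\A\rtimes G$, and $\A\rtimes T\propto G$ is $\A\rtimes G\propto G$; hence \corref{right with T} already gives that $\A\rtimes G$ is an $\A\rtimes G\propto G\sme\A$-equivalence. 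All that remains is to rewrite its four structure maps in the letters used in the present statement.

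Translating the two actions is immediate. For the left action, \corref{right with T} gives $(a,t,x)\cdot(b,t')=(ab,x\cdot t')$; in item (a) the acting element is written $(a,x,y)$ with group element $y$, acting on $(b,z)$, so $x\cdot t'$ becomes $y\cdot z=zy\inv$, giving $(a,x,y)\cdot(b,z)=(ab,zy\inv)$. For the right action, $(a,t)\cdot b=(ab,p(b)\inv\cdot t)$; since the $K$-action is left translation we have $p(b)\inv\cdot t=p(b)\inv t$, and with $t=z$ this is item (b).

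The only step needing genuine care is the left inner product. Here \corref{right with T} gives $\lip L<{(a,t)},{(b,t')}>=(ab^*,p(b)\cdot t,x)$, where $x$ is the unique element of $G$ with $t=x\cdot t'$. In item (c) the two entries are $(a,z)$ and $(b,y)$, so $t=z$ and $t'=y$; then $p(b)\cdot t=p(b)z$, while the defining relation $t=x\cdot t'$ reads $z=yx\inv$ and hence $x=z\inv y$. This yields $\lip L<{(a,z)},{(b,y)}>=(ab^*,p(b)z,z\inv y)$, which is item (c), and the right inner product $a^*b$ of \corref{right with T} transfers verbatim as item (d). The sole obstacle is thus bookkeeping: because the $G$-action is the flipped one $x\cdot y=yx\inv$, inverting $t=x\cdot t'$ to extract the third coordinate of the left inner product is precisely where an inverse is easily dropped or placed on the wrong side.
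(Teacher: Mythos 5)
Your proposal is correct and matches the paper's route exactly: the paper also obtains \corref{right} by specializing \corref{right with T} to $K=T=G$ with left translation and the flipped action $x\cdot y=yx\inv$ (the setup in the paragraph preceding the statement), citing \cite[Theorem~8.1]{hkqw2} for the general result. Your notational translation, including the inversion $z=yx\inv\implies x=z\inv y$ in the third coordinate of the left inner product, is carried out correctly.
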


Now that we have equivalent \fb s, by \cite[Theorem~3.10]{kqwrieffel}
we have an associated \emph{Rieffel correspondence}, giving a lattice
isomorphism between the sets of Fell-bundle ideals.  In the case that
a groupoid $G$ acts consistently on the \fb s, the Rieffel
correspondence restricts to the invariant ideals.  More precisely, we
have the following.

\begin{thm}\label{left strut}
  In \corref{left}, if $\II$ is a $G$-invariant ideal of $\A$, then
  the ideal of $\A\propto G\rtimes G$ associated to $\II$ via the
  Rieffel correspondence is $\II\propto G\rtimes G$.  In fact, the
  Rieffel correspondence restricts to a bijection between the
  $G$-invariant ideals of $\A$ and the $G$-invariant ideals of
  $\A\propto G\rtimes G$ 
  for the usual $G$-action on an action
    bundle 
  as above.
\end{thm}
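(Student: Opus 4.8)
The plan is to compute the Rieffel correspondence of \corref{left} explicitly on a $G$-invariant ideal and then to promote the resulting formula to a bijection by exhibiting a $G$-action on the equivalence bimodule that intertwines the two relevant algebra actions. Throughout I write $\E=\A\propto G$ for the bimodule, $\B=\A\propto G\rtimes G$ for the left algebra, and $\CC=\A$ for the right algebra of \corref{left}.

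For the first assertion I start from a $G$-invariant ideal $\II$ of $\CC=\A$ and identify the associated full submodule $\M=\E\cdot\II$, the right-handed form of \eqref{eq:3}. A typical generator is $(b,\eta)\cdot c=(b(\eta\cdot c),\eta)$ with $c\in\II$; since $\II$ is $G$-invariant we have $\eta\cdot c\in\II$, and since $\II$ is an ideal $b(\eta\cdot c)\in\II$, so $\M\subseteq\II\propto G$. For the reverse containment I invoke saturation of $\A$, which lets me write an arbitrary element of $J_h$ as a limit of sums $b(\eta\cdot c)$ with $c$ in the unit-fibre part of $\II$; hence $\M=\II\propto G$. I then compute the ideal $\II_\M$ of $\B$ generated by the left inner products $\lip L<\M,\E>$, as in \eqref{eq:5} and \eqref{eq:6}. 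For $(a,\gamma)\in\M$ (so $a\in\II$) and $(b,\eta)\in\E$ the left inner product of \corref{left} is $\lip L<{(a,\gamma)},{(b,\eta)}>=(a(\gamma\eta\inv\cdot b^*),\gamma\eta\inv,\eta)$, whose first coordinate lies in $\II$; thus $\lip L<\M,\E>\subseteq\II\propto G\rtimes G$. Once I check that $\II\propto G\rtimes G$ is genuinely an ideal of $\B$---this is where $G$-invariance of $\II$ is used, since it makes $\II\propto G$ an ideal of $\A\propto G$---I get $\II_\M\subseteq\II\propto G\rtimes G$, and the reverse inclusion follows again from saturation, because as $a\in\II$ and $b\in\A$ vary the products $a(\gamma\eta\inv\cdot b^*)$ are dense in $\II$ while the $G$-coordinates $(\gamma\eta\inv,\eta)$ exhaust all admissible pairs. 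Hence $\II_\M=\II\propto G\rtimes G$.

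To upgrade this to the claimed bijection I equip $\E$ with the candidate $G$-action $\phi_x(b,\eta)=(b,\eta x\inv)$, matching the usual action-bundle action $x\cdot(a,\gamma,\eta)=(a,\gamma,\eta x\inv)$ on $\B$ and the original $G$-action on $\CC=\A$. A short computation, using that $G$ acts on $\A$ by an action, shows that $\phi_x$ intertwines both module actions, $\phi_x((b,\eta)\cdot c)=\phi_x(b,\eta)\cdot(x\cdot c)$ and $\phi_x(\beta\cdot e)=(x\cdot\beta)\cdot\phi_x(e)$, and is equivariant for both inner products, e.g.\ $\rip R<{\phi_x(a,\gamma)},{\phi_x(b,\gamma)}>=x\cdot\rip R<{(a,\gamma)},{(b,\gamma)}>$ and similarly on the left. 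Since $\phi_x$ is thus an isometric isomorphism of $\E$ carrying the $\B\sme\CC$ structure to itself while shifting the two algebra actions by $x$, the Rieffel correspondence of \cite[Theorem~3.10]{kqwrieffel} intertwines the induced $G$-actions on the ideal lattices of $\A$ and of $\A\propto G\rtimes G$, and so carries fixed points to fixed points. This gives the bijection between $G$-invariant ideals; the forward direction is also visible directly, since $\II\propto G\rtimes G$ is manifestly invariant under $x\cdot(a,\gamma,\eta)=(a,\gamma,\eta x\inv)$.

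I expect the genuine work to lie in the first assertion, specifically the two appeals to saturation: identifying $\M$ with $\II\propto G$ and then showing that $\lip L<\M,\E>$ is dense in $\II\propto G\rtimes G$. The verification that $\II\propto G\rtimes G$ is an ideal---the point at which $G$-invariance of $\II$ is indispensable---should be isolated as a small lemma, presumably drawn from the semidirect-product constructions of \cite{hkqw1,hkqw2}. By contrast, once the compatible bimodule action $\phi_x$ has been found, the equivariance of the correspondence, and hence the bijection, is essentially formal.
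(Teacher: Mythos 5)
Your proposal is correct in substance and arrives at the same formula $\JJ=\II\propto G\rtimes G$, but it diverges from the paper's proof at two points, one in each half. In the first half, where you twice appeal to saturation and density (``limits of sums''), the paper uses \emph{exact} factorization: by \cite[Lemma~2.26 and Remark~2.2]{kqwrieffel}, every $i\in J_{h}$ factors exactly as $i=aj$ with $a\in A_{h}$ and $j\in J_{s(h)}$, and $G$-invariance lets one write $j=z\cdot j_{1}$; this exhibits each element of $\II\propto G$ as a single module product $(a,x)\cdot j'$ and each $(i,z,y)\in\II\propto G\rtimes G$ as a single inner product $\lip L<{(a,zy)},{(j_{1}^{*},y)}>$, so no closure arguments are needed and, as a byproduct, your side lemma that $\II\propto G\rtimes G$ is an ideal becomes unnecessary---it equals $\JJ$ and hence is one automatically. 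Your density route can be completed, but then you should work from the fibrewise-closure description \eqref{eq:5} rather than the generated-ideal description \eqref{eq:6}, since the latter presupposes the ideal lemma; as you anticipated, that is exactly where the hidden work lies. In the second half your route is genuinely different: your intertwining identities for $\phi_{x}(b,\eta)=(b,\eta x\inv)$ all check out (for instance $\lip L<{(a,\gamma x\inv)},{(b,\eta x\inv)}>=\bigl(a(\gamma\eta\inv\cdot b^{*}),\gamma\eta\inv,\eta x\inv\bigr)$, which is $x\cdot\lip L<{(a,\gamma)},{(b,\eta)}>$), and equivariance of the Rieffel correspondence then yields both directions of the bijection at once; the paper instead argues directly that if $\JJ$ is $G$-invariant, then the generating pairings $\rip R<{(a,x,y)\cdot (b,y)},(c,xy)>$ of the corresponding ideal $\II$ are permuted by the $G$-action as in \eqref{eq:11}, whence $\II$ is invariant by \eqref{eq:5}, the converse direction being the first assertion plus the manifest invariance of $\II\propto G\rtimes G$. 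Your argument is more conceptual and isolates a reusable principle (equivariant equivalences induce equivariant Rieffel correspondences), but one caution: since $G$ is a groupoid, $\phi_{x}$ is only partially defined (it requires $s(\eta)=r(x)$), so a single $x\in G$ does not act on a global ideal, and phrases like ``isometric isomorphism of $\E$'' and ``carries fixed points to fixed points'' must be unpacked as elementwise, fibrewise statements---which your computations do deliver, but a careful write-up should say so explicitly.
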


\begin{proof}
  First we show that
  \begin{equation}\label{eq:8}
    (\A\propto G)\cdot\II=\II\propto G.
  \end{equation}
  If $(a,x)\in \A\propto G$ and $i\in \II$, then
  \begin{align}
    (a,x)\cdot i
    &=\bigl(a(x\cdot i),x\bigr).
  \end{align}
  Putting $j=a(x\cdot i)$, we have $j\in \II$ because $\II$ is a
  $G$-invariant ideal, and so $(j,x)\in \II\propto G$.

  For the opposite containment, consider $(i,x)\in \II\propto G$.  Let
  $h=p(i)$.  Then $i\in J_{h}=A_{h}\cdot J_{s(h)}$ by
  \cite{kqwrieffel}*{Lemma~2.26}.  Then $i=a j$ with $a\in A_{h}$ and
  $j\in J_{s(h)}$ by \cite{kqwrieffel}*{Remark~2.2}.  Note that left
  multiplication by $x$ is a bijection of $A\restr{H_{s(x)}}$ onto
  $A\restr{H_{r(x)}}$.\footnote{Recall that if $u\in \go$, then let
    $H_{u}=\rho^{-1}(u)$ where $\rho:H\to\go$ is the moment map.  See
    \cite{kqwrieffel}*{Definition~4.9}.}  Since $\II$ is
  $G$-invariant, left-multiplication by $x$ also induces a bijection
  of $\II\restr{H_{s(x)}}$ onto $\II\restr{H_{r(x)}}$.  In particular,
  $j=x\cdot j'$ for some $j'\in J_{s(h)}$.  Then
  \begin{equation}
    \label{eq:7}
    (i,x)=(a j,x)= (a, x)\cdot
    j'\in  (\A\propto G)\cdot \II.
  \end{equation}
  Hence \eqref{eq:8} holds as claimed.

  Now let $\JJ$ be the ideal of $\A\propto G\rtimes G$ associated to
  $\II$ via the Rieffel correspondence.
  By the above, to show that
  \[
    \JJ\subset (\II\propto G)\rtimes G
  \]
  it suffices to consider an element of the form
  $\lip L<{(a,x)},(i,y)>$ with $(a,x)\in \A\propto G$ and
  $(i,y)\in \II\propto G$.\footnote{Where we have written
    $\lip L<\cdot,\cdot>$ in place of the more cumbersome
    $\lip \A\propto G\rtimes G<\cdot,\cdot>$.}  Since the inner
  product is
  \begin{align*}
    \bigl(a(x y\inv\cdot i^*),x y\inv,y\bigr),
  \end{align*}
  and since
  \[
    a(x y\inv\cdot i^*)\in \II
  \]
  because $\II$ is a $G$-invariant ideal and ideals are self-adjoint,
  we see that
  \[
    \lip L<{(a,x)},{(i,y)}> \in \II\propto G\rtimes G.
  \]

  For the opposite containment, consider
  $(i,z,y)\in \II\propto G\rtimes G$.  Let $h=p(i)$.  Then
  $i\in J_{h}=A_{h}\cdot J_{s(h)}$.  Let $i=a\cdot j$ with
  $a\in A_{h}$ and $j\in J_{s(h)}$ as above.  We can assume
  $j=z\cdot j_{1}$.  Note that $(a,zy)\in \A\propto G$ while
  $(j_{1}^{*},y)\in \II\propto G$.  Hence
  \begin{equation}
    \label{eq:9}
    (i,z,y)=(aj,z,y)=(a(zyy^{-1}\cdot j_{1}),zyy^{-1},y)=\lip
    L<{(a,zy)},(j_{1}^{*},y)> \in \JJ.
  \end{equation}
  Hence $\JJ=\II\propto G\rtimes G$ as required.

  Now suppose that $\JJ$ is a $G$-invariant ideal in
  $\A\propto G\rtimes G$.  Then the corresponding ideal, $\II$, in
  $\A$ is generated by pairings of the form
  $\rip R<{(a,x,y)\cdot (b , y)},(c,xy)>$ with $(a,x,y)\in \JJ$ and
  $(b,y),(c,xy)\in \A\propto G$.  That is, $\II$ is generated by
  elements of the form
  \begin{equation}
    \label{eq:10}
    (xy)^{-1}\cdot \bigl[\bigl(a(x\cdot b)\bigr)^{*}c\bigr].
  \end{equation}
  But
  \begin{align}
    \label{eq:11}
    z\cdot \Bigl( (xy)^{-1}\cdot \bigl[\bigl(a(x\cdot
    b)\bigr)^{*}c\bigr]\Bigr)
    &= (xyz^{-1})^{-1}\cdot \bigl[\bigl(a(x\cdot
      b)\bigr)^{*}c\bigr]  \\
    &= \rip R <{(a,x,yz^{-1})\cdot (b , yz^{-1})},(c,xyz^{-1})>
  \end{align}
  Since the $G$-action is isometric and bijective on the fibres,
  it now follows from \eqref{eq:5} that $\II$ is a $G$-invariant in
  $\A$.

  Since $\II\propto G\rtimes G$ is clearly $G$-invariant in
  $\A\propto G\rtimes G$, and since the Rieffel correspondence is a
  bijection, the result follows.
\end{proof}

Similarly, starting with action-product \fb s, we have:

\begin{thm}\label{right strut}
  In \corref{right}, if $\II$ is an ideal of $\A$, then the ideal,
  $\JJ$, of $\A\rtimes G\propto G$ associated to $\II$ via the Rieffel
  correspondence is $\II\rtimes G\propto G$.
\end{thm}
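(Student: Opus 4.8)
The plan is to run the argument in parallel with the proof of \thmref{left strut}, reading the Rieffel correspondence of \cite[Theorem~3.10]{kqwrieffel} from the right coefficient algebra $\CC=\A$ of the equivalence $\E=\A\rtimes G$ of \corref{right} across to its left coefficient algebra $\B=\A\rtimes G\propto G$. The first step is to identify the full Banach $\B\sme\CC$ submodule $\M=\E\cdot\II$ determined by $\II$, showing that
\[
  (\A\rtimes G)\cdot\II=\II\rtimes G.
\]
For the inclusion ``$\subseteq$'', the right action~(b) of \corref{right} gives $(a,z)\cdot i=(ai,p(i)\inv z)$, and $ai\in\II$ since $\II$ is an ideal of $\A$, so the product lies in $\II\rtimes G$. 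For ``$\supseteq$'', given $(j,w)\in\II\rtimes G$ with $h=p(j)$, I would use \cite[Lemma~2.26]{kqwrieffel} to factor $j=ai$ with $a\in A_h$ and $i\in J_{s(h)}$, so that $p(i)=s(h)$ is a unit, and then recognize $(j,w)=(a,w)\cdot i\in\E\cdot\II$. Note that $\II\rtimes G\propto G$ is visibly an ideal of $\B$, since $\II$ is an ideal of $\A$ and both $\rtimes G$ and $\propto G$ preserve the ideal property.

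With $\M=\II\rtimes G$ in hand, I would use \eqref{eq:6} to realize $\JJ$ as the ideal of $\B$ generated by the left inner products of elements of $\E$ against elements of $\M$, and then prove the two containments exactly as before. For $\JJ\subseteq\II\rtimes G\propto G$ it suffices to check the generators: by the left-inner-product formula~(c),
\[
  \lip L<{(a,z)},{(i,y)}>=\bigl(ai^*,\,p(i)z,\,z\inv y\bigr),
\]
and $ai^*\in\II$ because ideals are self-adjoint and absorbing; since $\II\rtimes G\propto G$ is an ideal of $\B$ containing all such generators, it contains the ideal $\JJ$ they generate. For the reverse containment I would write a typical element of $\II\rtimes G\propto G$ as a single left inner product, just as in \eqref{eq:9}: given $(k,v,u)$ with $h=p(k)$, factor $k=ai$ as above, set $b=i^*\in J_{s(h)}$, and verify
\[
  \lip L<{(a,v)},{(b,vu)}>=\bigl(ab^*,\,p(b)v,\,u\bigr)=(k,v,u),
\]
using $ab^*=ai=k$ and $p(b)v=s(h)v=v$. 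This places $(k,v,u)$ in $\JJ$, and together the two inclusions give $\JJ=\II\rtimes G\propto G$.

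The manipulations of the inner-product formulas are routine; the step I expect to demand the most care is the groupoid-level bookkeeping inside the semidirect product $\A\rtimes G\propto G$. In particular, the collapse $p(b)v=s(h)v=v$ in the last display---and the analogous identity $p(i)\inv w=s(h)w=w$ in the submodule computation---rests on the composability constraints implicit in membership of $(k,v,u)$ in $\II\rtimes G\propto G$ forcing the required unit identities (here $r(v)=s(h)$), and one must likewise confirm that the source and range conditions make each displayed left inner product well defined and equal to its target triple. Once these compatibilities are verified, the argument closes as in the proof of \thmref{left strut}, and---in contrast to that theorem---no $G$-invariance hypothesis on $\II$ is required.
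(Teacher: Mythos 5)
Your proposal is correct and follows essentially the same route as the paper's proof: the same computation of the submodule $(\A\rtimes G)\cdot\II=\II\rtimes G$ via the factorization $i=aj$ from \cite[Lemma~2.26]{kqwrieffel}, the same check that the left inner products $\lip L<{(a,z)},{(i,y)}>$ land in $\II\rtimes G\propto G$, and the same realization of an arbitrary $(k,v,u)$ as a single inner product $\lip L<{(a,v)},{(i^{*},vu)}>$ (the paper's $\lip L<{(a,z)},{(j^{*},zy)}>$). Your extra bookkeeping of the unit identities such as $p(i^{*})v=s(h)v=v$ merely makes explicit what the paper leaves implicit in noting $s(x)=r(z)$.
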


\begin{proof}
  First we show that in analogy with \eqref{eq:8} we have
  \[
    (\A\rtimes G)\cdot\II=\II\rtimes G.
  \]
  If $(a,z)\in \A\rtimes G$ and $i\in \II$, then
  \begin{align}
    (a,z)\cdot i
    &=(ai,p(i)^{-1}z ),
  \end{align}
  which is in $\II\rtimes G$ because $\II$ is an ideal.

  For the opposite containment, let $(i,z)\in \II\rtimes G$ with
  $p(i)=x$.  Note that, $s(x)=r(z)$.  As in the proof of
  Theorem~\ref{left strut}, we can factor $i=aj$ with $a\in A_{x}$ and
  $j\in J_{s(x)}$.  Then
  \[
    (i,z)=(aj,z)= (a,z)\cdot j\in (\A\rtimes G)\cdot \II.
  \]

  Now let $\JJ$ be the ideal of $\A\rtimes G\propto G$ associated to
  $\II$ via the Rieffel correspondence.  For the containment
  \[
    \JJ\subset \II\rtimes G\propto G,
  \]
  let $(a,z)\in \A\rtimes G$ and $(i,y)\in \II\rtimes G$.  Then
  \begin{align*}
    \lip L<{(a,z)},{(i,y)}>
    &=\bigl(ai^*,p(i)z,z^{-1}y\bigr)
      \in \II\rtimes G\propto G
  \end{align*}
  because $\II$ is an ideal and hence self-adjoint.

  For the opposite containment, given
  $(i,z,y)\in \II\rtimes G\propto G$, we can factor $i=aj$ with
  $a\in A_{x}$ and $j\in J_{s(x)}$ where $x=p(i)$.  Since $s(x)=r(z)$,
  we have
  \[
    (i,z,y)=\lip L<{(a,z)},{(j^{*},zy)}> \in \JJ
  \]
  because $(j^{*},zy)\in \II\rtimes G$.
\end{proof}

\section{Ladders}\label{sec:ladder}

If $p:\A\to H$ is Fell bundle on which $G$ acts by isomorphisms, then
we let $\II_G(\A)$ be the set of $G$-invariant ideals of the \fb\ $\A$.
Then we let $\II_{G}(\A\propto G\rtimes G)$ be the $G$-invariant ideals of
the action Fell bundle $\A\propto G\rtimes G$ as above.  Consider the
diagram
\begin{equation}\label{eq:ladder}
  \begin{tikzcd}
    &\II(\A\propto G\rtimes G\propto G)\\
    \II_G(\A\propto G\rtimes G)
    \arrow[ur,"\KK\mapsto \KK\propto G"]\\
    &\II(\A\propto G) \arrow[ul,swap,"\JJ\mapsto \JJ\rtimes G"]
    \arrow[uu,"\JJ\mapsto \JJ\rtimes
    G\propto G"']\\
    \II_G(\A)  \arrow[ur,"\II\mapsto \II\propto G"']
    \arrow[uu,"\II\mapsto \II\propto G\rtimes G"]
  \end{tikzcd}
\end{equation}
This diagram commutes and constitutes our basic Ladder Diagram.
Moreover the vertical arrows coincide with
the Rieffel correspondences, by Theorems~\ref{left strut} and
\ref{right strut}.  Therefore by the Ladder lemma all the arrows are
bijections.

We record part of this formally:

\begin{thm}\label{fb ladder}
  If a groupoid $G$ acts on a \fb\ $\A\to H$, then every ideal of
    $\A\propto G$ is of the form $\II\propto G$ for an $G$-invariant
    ideal of $\A$ and the map
    \[
      \II\mapsto \II\propto G
    \]
    gives a lattice isomorphism between the $G$-invariant ideals of
    $\A$ and all ideals in $\A\propto G$.
\end{thm}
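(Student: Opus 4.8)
The plan is to read the statement off the ladder diagram \eqref{eq:ladder} rather than argue from scratch. The advertised map $\II\mapsto\II\propto G$ is precisely the lower diagonal rung $f\colon\II_G(\A)\to\II(\A\propto G)$ of that diagram, so it suffices to prove that $f$ is a lattice isomorphism, and the Ladder Lemma reduces this to understanding the two vertical struts. Concretely, I would record the relations forced by commutativity, namely $g\circ f=u$ for the lower triangle and $h\circ g=v$ for the upper one, where $u$ and $v$ are the struts and $g,h$ the remaining rungs.

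First I would confirm that \eqref{eq:ladder} commutes. The two triangles assert that $\II\mapsto(\II\propto G)\rtimes G$ agrees with the left strut $\II\mapsto\II\propto G\rtimes G$, and that $\JJ\mapsto(\JJ\rtimes G)\propto G$ agrees with the right strut $\JJ\mapsto\JJ\rtimes G\propto G$; both are matters of associativity of the iterated semidirect- and action-product constructions, together with the bookkeeping that the intermediate rungs land in the advertised sets. In particular one must check that $\JJ\rtimes G$ is $G$-invariant in $\A\propto G\rtimes G$ for the usual $G$-action on an action bundle, exactly as was noted at the end of the proof of \thmref{left strut}. I expect this commutativity-and-well-definedness bookkeeping to be the only step requiring genuine care.

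Next I would identify the struts with Rieffel correspondences. The left strut $\II\mapsto\II\propto G\rtimes G$ is the Rieffel correspondence furnished by \thmref{left strut} (via \corref{left}), and the right strut $\JJ\mapsto\JJ\rtimes G\propto G$ is the Rieffel correspondence furnished by \thmref{right strut} (via \corref{right} applied to $\A\propto G$ in place of $\A$). By \cite[Theorem~3.10]{kqwrieffel} each Rieffel correspondence is a bijection, indeed a lattice isomorphism, so $u$ and $v$ are bijective and order-preserving in both directions. Applying the Ladder Lemma to \eqref{eq:ladder} then shows that every rung, and in particular $f$, is a bijection, which already yields that every ideal of $\A\propto G$ has the form $\II\propto G$ for some $G$-invariant ideal $\II$ of $\A$.

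Finally I would upgrade bijectivity to a lattice isomorphism. The rung $f$ is manifestly order-preserving, since $\II_1\subset\II_2$ forces $\II_1\propto G\subset\II_2\propto G$. The Ladder Lemma also shows the middle rung $g\colon\JJ\mapsto\JJ\rtimes G$ to be a bijection, and $g$ is order-preserving as well; writing $f=g^{-1}\circ u$ gives $f^{-1}=u^{-1}\circ g$, which is order-preserving because $u^{-1}$ is (as $u$ is a lattice isomorphism) and $g$ is. Hence $f$ is an order-isomorphism, that is, a lattice isomorphism between the $G$-invariant ideals of $\A$ and all ideals of $\A\propto G$, as claimed. Once the struts are recognized as Rieffel correspondences, the Ladder Lemma does all the remaining work.
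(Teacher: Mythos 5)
Your proposal is correct and takes essentially the same route as the paper: the paper likewise reads \thmref{fb ladder} directly off the commuting ladder \eqref{eq:ladder}, identifies the vertical struts with the Rieffel correspondences via Theorems~\ref{left strut} and \ref{right strut}, and invokes the Ladder Lemma to conclude that all the rungs are bijections. Your explicit order-theoretic upgrade (writing $f^{-1}=u^{-1}\circ g$ to see that the inverse is order-preserving) merely spells out the lattice-isomorphism claim that the paper leaves implicit.
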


Note that the above ladder \eqref{eq:ladder}
could be continued indefinitely upward.
And it could be started with the structure on the right-hand side, so
that the first rung would be an action product (rather than a
semidirect product).

To see how that would look, we consider 
a Fell bundle $p:\A\to G$ over a groupoid $G$.\footnote{To 
  continue the ladder \eqref{eq:ladder}, just replace $G$ with
  $G\propto G$.}  We can form the following 
ladder
\[
  \begin{tikzcd}
    &\II_{G}(\A\rtimes G\propto G\rtimes G)\\
    \II(\A\rtimes G\propto G)
    \arrow[ur,"\KK\mapsto \KK\rtimes G"]\\
    &\II_{G}(\A\rtimes G)
    \arrow[ul,swap,"\JJ\mapsto \JJ\propto G"]
    \arrow[uu,swap, "\JJ\mapsto \JJ\propto G\rtimes G"]\\
    \II(\A) \arrow[ur,swap,"\II\mapsto \II\rtimes G"]
    \arrow[uu,"\II\mapsto \II\rtimes G\propto G"]
  \end{tikzcd}
\]
The argument that the right strut is a bijection between $G$-invariant
ideals is a special case of Theorem~\ref{left strut}.
Hence, we get a
companion result to \thmref{fb ladder}.
\begin{thm}\label{bonus ladder}
  If $\A\to G$ is a \fb, then the map
  \[\II\mapsto\II\rtimes G\]
  gives a lattice isomorphism between the ideals of $\A$
  and the $G$-invariant ideals of $\A\rtimes G$.
\end{thm}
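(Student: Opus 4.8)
The plan is to read the map $\II\mapsto\II\rtimes G$ off the ladder displayed just above and then invoke the Ladder Lemma, so that almost nothing new has to be proved about the rungs themselves. Abbreviating the corners as $X=\II(\A)$, $Y=\II_G(\A\rtimes G)$, $Z=\II(\A\rtimes G\propto G)$, and $W=\II_G(\A\rtimes G\propto G\rtimes G)$, the bottom rung is $f\colon\II\mapsto\II\rtimes G$, the middle rung is $g\colon\JJ\mapsto\JJ\propto G$, the top rung is $h\colon\KK\mapsto\KK\rtimes G$, the left strut is $u\colon\II\mapsto\II\rtimes G\propto G$, and the right strut is $v\colon\JJ\mapsto\JJ\propto G\rtimes G$. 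Commutativity of the square is forced by the notation: $g\bigl(f(\II)\bigr)=(\II\rtimes G)\propto G=u(\II)$ and $h\bigl(g(\JJ)\bigr)=(\JJ\propto G)\rtimes G=v(\JJ)$, where one reads $\A\rtimes G\propto G$ and its relatives as $(\A\rtimes G)\propto G$, and so on. Thus it remains only to show that the two struts are bijections.

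For the left strut $u$, I would invoke \corref{right}, which presents $\A\rtimes G$ as an $\A\rtimes G\propto G\sme\A$ equivalence. The Rieffel correspondence of \cite[Theorem~3.10]{kqwrieffel} for this equivalence is a lattice isomorphism between all ideals of $\A$ and all ideals of $\A\rtimes G\propto G$, and Theorem~\ref{right strut} computes it to be exactly $\II\mapsto\II\rtimes G\propto G$. Hence $u$ is a bijection, in fact a lattice isomorphism onto all of $\II(\A\rtimes G\propto G)$.

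For the right strut $v$, I would apply Theorem~\ref{left strut} verbatim, but with the Fell bundle $\A$ there replaced by $\A\rtimes G$, equipped with its usual action-bundle $G$-action $x\cdot(b,y)=(b,yx\inv)$ from \corref{right}. That theorem then asserts precisely that $\JJ\mapsto\JJ\propto G\rtimes G$ restricts to a bijection between the $G$-invariant ideals of $\A\rtimes G$ and the $G$-invariant ideals of $(\A\rtimes G)\propto G\rtimes G=\A\rtimes G\propto G\rtimes G$, so $v$ is a bijection; this is the ``special case of Theorem~\ref{left strut}'' alluded to in the text. I expect the only genuine obstacle to be the bookkeeping needed to confirm that $\A\rtimes G$ really meets the hypotheses of Theorem~\ref{left strut}---that the action-bundle $G$-action is by Fell-bundle isomorphisms and that the notion of $G$-invariance on $\A\rtimes G\propto G\rtimes G$ matches the one already in play---rather than any new analytic content.

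With both struts bijective and the square commutative, the Ladder Lemma yields that $f$, $g$, and $h$ are all bijections; in particular $f\colon\II\mapsto\II\rtimes G$ maps $\II(\A)$ bijectively onto $\II_G(\A\rtimes G)$. To upgrade this to a lattice isomorphism I would note that every map in the ladder is manifestly inclusion-preserving and that the struts $u,v$ are order isomorphisms. Since $g\inv=v\inv\circ h$ with $v\inv$ and $h$ order-preserving, $g\inv$ is order-preserving; hence both $f=g\inv\circ u$ and $f\inv=u\inv\circ g$ are order-preserving, so $f$ is an order isomorphism and therefore a lattice isomorphism, as claimed.
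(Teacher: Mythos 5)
Your proposal is correct and follows essentially the same route as the paper: you build the same ladder, identify the strut $\II\mapsto\II\rtimes G\propto G$ with the Rieffel correspondence via Theorem~\ref{right strut}, treat $\JJ\mapsto\JJ\propto G\rtimes G$ as the special case of Theorem~\ref{left strut} applied to $\A\rtimes G$ with its action-bundle $G$-action (exactly the paper's remark), and conclude with the Ladder Lemma. Your explicit order-preservation argument upgrading the bijection to a lattice isomorphism is a small addition the paper leaves implicit, and it is sound.
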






\begin{biblist}

\bib{gkqwladder}{article}{
      author={Gillespie, Matthew},
      author={Kaliszewski, S.},
      author={Quigg, John},
      author={Williams, Dana~P.},
       title={Bijections between sets of invariant ideals, via the ladder
  technique},
        date={2025},
        ISSN={1076-9803},
     journal={New York J. Math.},
      volume={31},
       pages={43\ndash 53},
      review={\MR{4842036}},
}

\bib{hkqw1}{article}{
      author={Hall, Lucas},
      author={Kaliszewski, S.},
      author={Quigg, John},
      author={Williams, Dana~P.},
       title={Groupoid semidirect product fell bundles. {I}. {A}ctions by
  isomorphisms},
        date={2023},
        ISSN={0379-4024},
     journal={J. Operator Theory},
      volume={89},
      number={1},
       pages={125\ndash 153},
      review={\MR{4567340}},
}

\bib{hkqw2}{article}{
      author={Hall, Lucas},
      author={Kaliszewski, S.},
      author={Quigg, John},
      author={Williams, Dana~P.},
       title={Groupoid semidirect product {F}ell bundles {II}---principal
  actions and stabilization},
        date={2023},
        ISSN={0022-2518,1943-5258},
     journal={Indiana Univ. Math. J.},
      volume={72},
      number={3},
       pages={1147\ndash 1173},
         url={https://doi.org/10.1512/iumj.2023.72.9447},
      review={\MR{4613750}},
}

\bib{ion}{article}{
      author={Ionescu, Marius},
      author={Williams, Dana~P.},
       title={Remarks on the ideal structure of {F}ell bundle
  {$C^\ast$}-algebras},
        date={2012},
        ISSN={0362-1588},
     journal={Houston J. Math.},
      volume={38},
      number={4},
       pages={1241\ndash 1260},
      review={\MR{3019033}},
}

\bib{kqwrieffel}{article}{
      author={Kaliszewski, S.},
      author={Quigg, John},
      author={Williams, Dana~P.},
       title={The {R}ieffel correspondence for equivalent {F}ell bundles},
        date={2024},
        ISSN={1446-7887,1446-8107},
     journal={J. Aust. Math. Soc.},
      volume={117},
      number={3},
       pages={288\ndash 307},
         url={https://doi.org/10.1017/S144678872300037X},
      review={\MR{4832964}},
}

\bib{tool}{book}{
      author={Williams, Dana~P.},
       title={A tool kit for groupoid {$C^*$}-algebras},
      series={Mathematical Surveys and Monographs},
   publisher={American Mathematical Society, Providence, RI},
        date={2019},
      volume={241},
        ISBN={978-1-4704-5133-2},
  url={https://doi-org.ezproxy1.lib.asu.edu/10.1016/j.physletb.2019.06.021},
      review={\MR{3969970}},
}

\end{biblist}


\end{document}